\DeclareMathAlphabet{\varmathbb}{U}{pxsyb}{m}{n}
\def\leq{\leqslant}
\def\geq{\geqslant}
\def\phi{\varphi}
\def\bar{\overline}
\def\kappa{\varkappa}
\newcommand{\D}{\mathrm{d}\kern0.2pt}%
\newcommand{\E}{\mathrm{e}\kern0.2pt} 
\newcommand{\ii}{\kern0.05em\mathrm{i}\kern0.05em}
\newcommand{\RR}{\mathbb{R}}%
\newtheorem{theorem}{\bf \indent Theorem}[section]
\newtheorem{proposition}{\bf \indent Proposition}[section]
\theoremstyle{remark}
\numberwithin{equation}{section}
\begin{document}

\noindent {\Large \bf The convexity of a planar domain via properties of solutions
\\[3pt] to the modified Helmholtz equation}

\vskip5mm

{\bf Nikolay Kuznetsov}

\vskip-2pt {\small Laboratory for Mathematical Modelling of Wave Phenomena}
\vskip-4pt {\small Institute for Problems in Mechanical Engineering, RAS} \vskip-4pt
{\small V.O., Bol'shoy pr. 61, St Petersburg 199178, Russia \vskip-4pt {\small
nikolay.g.kuznetsov@gmail.com}

\vskip7mm

\parbox{144mm} {\noindent A new characterization of convexity of a planar domain is
obtained. Its derivation involves two classical facts: the Varadhan's formula,
expressing the distance function with respect to the domain's boundary via
real-valued solutions of the modified Helmholtz equation, and the convexity of a
planar domain in which the distance function is superharmonic.}

\vskip6mm

{\centering \section{Introduction} }

\noindent Let a point $x = (x_1, x_2) \in \RR^2$ belong to a half-plane, say $x_2 >
0$. It is obvious that the distance from $x$ to the $x_1$-axis bounding this
half-plane\,---\,the length of the segment orthogonal to the $x_1$-axis\,---\,is
equal to $x_2$, and so is harmonic in this domain. The converse assertion is far
from being trivial to prove; it is referred to as one of ``three secrets about
harmonic functions'' in the interesting note~\cite{B}. A half-plane is the maximal,
as it were, convex domain in $\RR^2$, whereas our aim is to characterize in a new
fashion the convexity of a bounded planar domain.

In an arbitrary planar domain $D$, the distance function with respect to $\partial
D$ is
\[ d (x, \partial D) = \inf_{y \in \partial D} |x-y| \, ,
\]
where $x \in D$ and $|x-y| = \sqrt{(x_1 - y_1)^2 + (x_2 - y_2)^2}$ is the Euclidean
norm in $\RR^2$. It is known that the superharmonicity of this function in $D$
implies that the domain is convex; see Theorem~1.3 below (it was obtained by
Armitage and Kuran \cite{AK} in 1985). In the present note, we demonstrate that it
is also possible to describe the domain's convexity by means of real-valued
solutions of the modified Helmholtz equation (labelled panharmonic functions in
\cite{D}); indeed, this is achieved by imposing some conditions on them.

\vspace{2mm}

{\bf 1.1. Statement of the problem; formulation of the result.} Throughout this
paper~$D$ denotes a bounded domain of the Euclidean plane $\RR^2$. Its boundary
$\partial D$ is assumed to be sufficiently smooth, say $C^2$, and so the Dirichlet
problem for the modified Helmholtz equation
\begin{equation}
\nabla^2 v - \mu^2 v = 0 \ \ \mbox{for} \ x = (x_1, x_2) \in D, \quad \mu \in \RR_+
, \label{MHh}
\end{equation}
has a unique solution satisfying the boundary condition
\begin{equation}
v (x) = 1 \quad \mbox{for} \ x \in \partial D ; \label{bc}
\end{equation}
here and below $\nabla = (\partial_1, \partial_2)$ is the gradient operator,
$\partial_i = \partial / \partial x_i$.

It has long bean known (see \cite[p.~437]{V}) that a solution of this problem, say
$v (x, \mu)$, satisfies
\begin{equation}
0 \leq v (x, \mu) \leq 1 \ \ \mbox{for all} \ x \in \bar D \ \mbox{and any} \ \mu >
0. \label{ineq}
\end{equation}
Moreover, the left inequality is strict; see \cite[Sect.~2.2]{HMOSY} for the proof.

It occurs that problem's solutions corresponding to large values of $\mu$
characterize the domain~$D$. Namely, our aim is to prove the following

\begin{theorem}
Let $D \subset \RR^2$ be a bounded domain with smooth boundary. If the condition
\begin{equation}
|\nabla v (x, \mu)| \leq \mu v (x, \mu) , \ \ x \in D , \label{cond}
\end{equation}
holds for solutions of problem \eqref{MHh}, \eqref{bc} with all large $\mu > 0$, then
$D$ is convex.
\end{theorem}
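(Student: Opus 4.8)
The plan is to recast the hypothesis in terms of the logarithmic transform
\[
u (x, \mu) = - \mu^{-1} \ln v (x, \mu) ,
\]
which is well defined and smooth in $D$ because $v (\cdot, \mu)$ is a smooth, strictly positive solution of \eqref{MHh}, \eqref{bc} (recall that the left inequality in \eqref{ineq} is strict, and interior elliptic regularity makes $v$ smooth in $D$). Writing $v = \E^{-\mu u}$ and substituting into \eqref{MHh}, a direct computation gives $\nabla v = - \mu v \nabla u$ and $\nabla^2 v = \mu^2 v |\nabla u|^2 - \mu v \nabla^2 u$; hence, after dividing \eqref{MHh} by $\mu v > 0$, the transform satisfies
\[
\nabla^2 u (x, \mu) = \mu \bigl( |\nabla u (x, \mu)|^2 - 1 \bigr) , \quad x \in D .
\]
Furthermore, since $\nabla v = - \mu v \nabla u$ yields $|\nabla v| = \mu v |\nabla u|$, the hypothesis \eqref{cond} is precisely the statement that $|\nabla u (x, \mu)| \leq 1$ for $x \in D$ and all large $\mu$.

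The crux of the argument is then immediate: inserting the gradient bound $|\nabla u|^2 \leq 1$ into the equation just derived forces $\nabla^2 u (x, \mu) \leq 0$ throughout $D$. Thus, for every sufficiently large $\mu$, the function $u (\cdot, \mu)$ is superharmonic in $D$. In this way the single inequality \eqref{cond}, which looks like a mere gradient estimate, is converted into superharmonicity of the logarithmic transform.

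It remains to pass to the limit $\mu \to \infty$ and invoke Theorem~1.3. By Varadhan's formula (see \cite[p.~437]{V}), $u (x, \mu) = - \mu^{-1} \ln v (x, \mu)$ converges to the distance function $d (x, \partial D)$, the convergence being uniform on compact subsets of $D$. Since a locally uniform limit of superharmonic functions is again superharmonic — the spherical sub-mean-value inequality over small spheres centred at $x$ and contained in $D$, which characterizes superharmonicity, survives the passage to the limit — the distance function $d (\cdot, \partial D)$ is superharmonic in $D$, and Theorem~1.3 delivers the convexity of $D$. The main obstacle is this last passage to the limit: the computation producing $\nabla^2 u \leq 0$ is routine, but $d (\cdot, \partial D)$ is in general only Lipschitz (it fails to be $C^2$ along the medial axis), so the limiting inequality cannot be read as a pointwise Laplacian bound and must instead be handled through the potential-theoretic, mean-value notion of superharmonicity, which is exactly what the stability of superharmonicity under locally uniform limits supplies. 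One should also confirm that Varadhan's convergence is indeed locally uniform in the interior, as required for that stability statement.
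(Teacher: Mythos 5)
Your proof is correct, and its skeleton coincides with the paper's: you convert hypothesis \eqref{cond} into superharmonicity of $- \mu^{-1} \log v (\cdot, \mu)$ (your computation with $u = - \mu^{-1} \ln v$, giving $\nabla^2 u = \mu \, ( |\nabla u|^2 - 1 ) \leq 0$, is the same identity the paper obtains by expanding $\nabla^2 (\log v)$ and using the equation), you pass to the limit $\mu \to \infty$ via Varadhan's formula, and you conclude with the Armitage--Kuran theorem (Theorem~1.3). The one place where you genuinely diverge is the limiting step. The paper does not appeal to the general stability of superharmonicity under locally uniform limits; instead it invokes a quantitative corollary of Varadhan's work (Proposition~2.1), namely $v (x, \mu) \leq C_\rho \exp \{ - \mu \, (1 - \rho) \, d (x, \partial D) \}$, to get the pointwise lower bound \eqref{final}, averages that bound over a disc $B_r (x)$ using the already-established superharmonicity of $- \mu^{-1} \log v$, and then lets $\mu \to \infty$ and $\rho \to +0$; in that route only convergence at the single centre point $x$ is needed, since the behaviour of the integrand is controlled by the explicit exponential bound. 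Your route instead leans on the uniformity of the convergence in Theorem~1.2 to pass the limit inside the area integral: each $- \mu^{-1} \log v (\cdot, \mu)$ is continuous and satisfies the super-mean-value inequality, and locally uniform limits of such functions satisfy it again, so $d (\cdot, \partial D)$ is superharmonic. Both arguments are sound: yours is shorter and uses only the convergence statement of Theorem~1.2 (which the paper asserts to be uniform on $D$, more than the local uniformity you require), while the paper's detour through Proposition~2.1 buys robustness, since it would survive even if Varadhan's convergence were known only pointwise. Your closing caveats (strict positivity of $v$ from \eqref{ineq}, interior smoothness by elliptic regularity, and the need to treat $d (\cdot, \partial D)$ through the mean-value definition because it is merely Lipschitz) are all correctly placed.
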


This theorem is proved in Section 2, and here we illustrate it with two elementary
examples.

In the first one, the domain $D$ is the half-plane $x_2 > 0$, in which case it is
natural to require
\[ v (x, \mu) \to 0 \quad \mbox{as} \ x_2 \to \infty . 
\]
Duffin established that $v (x, \mu) = \E^{- \mu x_2}$ solves problem \eqref{MHh},
\eqref{bc} complemented by the latter condition; see his proof of \cite[Theorem
5]{D}. Condition \eqref{cond} is fulfilled for this function for all $\mu > 0$, and
so Theorem~1.1 implies the obvious fact that $D$ is convex. Notice that
\[ - \mu^{-1} \log v (x, \mu) = x_2 = d (x, \partial D)
\]
in agreement with the assertion of Theorem~1.2 formulated below.

In the second example, the domain $D$ is a disc, and so convex. As usual, $I_0$
stands for the modified Bessel function of the first kind (see, for example,
\cite[p.~111]{D}). If $a \in (0, 1)$, then $v (x, \mu) = a I_0 (\mu |x|)$ solves
problem \eqref{MHh}, \eqref{bc} in the disc $B_r (0) = \{ y \in \RR^2 : |y| < r \}$
provided $r > 0$ is such that $a I_0 (\mu r) = 1$.

It is clear that properties of the function $I_0$ (see \cite[sect.~9.6]{AS}) yield
condition \eqref{cond} for $v (\cdot, \mu)$; indeed, it takes the form $I_1 (\mu |x|)
< I_0 (\mu |x|)$, because
\[ |\nabla v (x, \mu)| = \mu a I_1 (\mu |x|)
\]
in view of formula \cite[9.6.27]{AS}.

\vspace{2mm}

{\bf 1.2. Background.} Here we formulate two classical results about the distance
function with respect to $\partial D$. These results are essential for proving
Theorem 1.1.

\begin{theorem}[Varadhan, \cite{V}]
Let $D \subset \RR^2$ be a bounded domain with smooth boundary. If $v (x, \mu)$ is a
solution of the Dirichlet problem \eqref{MHh}, \eqref{bc}, then 
\begin{equation}
- \mu^{-1} \log v (x, \mu) \to d (x, \partial D) \quad as \ \mu \to \infty
\label{lim}
\end{equation}
uniformly on $D$.
\end{theorem}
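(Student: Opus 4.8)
The plan is to prove \eqref{lim} by the logarithmic substitution together with a two-sided comparison principle, bypassing Varadhan's original probabilistic route (which writes $v(x,\mu)$ as the mean value of $\E^{-\mu^2\tau/2}$ over Brownian paths started at $x$, $\tau$ being the first exit time from $D$, and then applies small-time large deviations). Setting $\phi_\mu = -\mu^{-1}\log v$, the positivity and upper bound \eqref{ineq} guarantee that $\phi_\mu \ge 0$ is well defined on $\bar D$ and vanishes on $\partial D$; a direct computation turns \eqref{MHh} into the viscosity-regularised eikonal equation $|\nabla\phi_\mu|^2 - \mu^{-1}\nabla^2\phi_\mu = 1$, whose formal limit $|\nabla\phi| = 1$, $\phi|_{\partial D} = 0$, is solved by $d(\cdot,\partial D)$. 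Rather than develop the viscosity-solution machinery, I would establish \eqref{lim} through explicit radial barriers, which also delivers the uniformity directly.

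Fix $x \in D$ and write $d_0 = d(x,\partial D)$. For the lower bound on $\phi_\mu$ I would compare $v$ with the solution on the inscribed disc $B_{d_0}(x) \subset D$. The radial solution of \eqref{MHh} equal to $1$ on $\partial B_{d_0}(x)$ and regular at the centre is $I_0(\mu|y-x|)/I_0(\mu d_0)$; since $v \le 1$ on $\partial B_{d_0}(x)$ by \eqref{ineq}, the maximum principle for $\nabla^2 - \mu^2$ (a positive interior maximum would force $\nabla^2 v - \mu^2 v < 0$) yields $v(x,\mu) \le 1/I_0(\mu d_0)$. For the upper bound I would invoke the uniform exterior ball condition furnished by the $C^2$ boundary: at the nearest point $y_0 \in \partial D$ there is a disc $B_{r_0}(z)$ with $B_{r_0}(z) \cap D = \varnothing$, and $x, y_0, z$ are collinear, so $|x-z| = d_0 + r_0$. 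The function $w(y) = K_0(\mu|y-z|)/K_0(\mu r_0)$, built from the modified Bessel function of the second kind, solves \eqref{MHh} in $D$ and is smooth up to $\partial D$ (its singularity sits at $z \notin \bar D$); because $|y-z| \ge r_0$ throughout $\bar D$ it satisfies $w \le 1 = v$ on $\partial D$, and the same maximum principle gives $v \ge w$ in $D$. Together these furnish
\[ \frac{1}{\mu}\log I_0(\mu d_0) \le \phi_\mu(x) \le \frac{1}{\mu}\log\frac{K_0(\mu r_0)}{K_0(\mu(d_0+r_0))} . \]

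I would finish by inserting the classical asymptotics $I_0(s) \sim \E^{s}/\sqrt{2\pi s}$ and $K_0(s) \sim \sqrt{\pi/(2s)}\,\E^{-s}$: the left member becomes $d_0 - (2\mu)^{-1}\log(2\pi\mu d_0)$ and the right one $d_0 + (2\mu)^{-1}\log[(d_0+r_0)/r_0]$, so both sides tend to $d_0$. Since $r_0$ can be taken uniform over $\partial D$ and $d_0 \le \operatorname{diam}D$ is bounded, the $O(\mu^{-1}\log\mu)$ errors are controlled independently of $x$, which is exactly the uniform convergence claimed in \eqref{lim}.

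The main obstacle is this uniformity in the boundary layer, where $d_0 \to 0$ and the term $\mu^{-1}\log(\mu d_0)$ ceases to be uniformly small. There I would argue separately, using the crude a priori bounds $0 \le \mu^{-1}\log I_0(\mu d_0) \le d_0$ to squeeze $\phi_\mu$ to $0$ jointly with $d_0$, and then patch the interior and boundary estimates into genuine uniform convergence by compactness of $\bar D$ and continuity up to $\partial D$ of both $\phi_\mu$ and $d(\cdot,\partial D)$. A secondary technical point is the existence of a uniform exterior ball radius $r_0 > 0$, which follows from the $C^2$ (indeed $C^{1,1}$) regularity of $\partial D$ through a bound on its curvature.
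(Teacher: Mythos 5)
Your argument is correct, but it is genuinely different from what the paper does: the paper offers no proof of Theorem~1.2 at all\,---\,it quotes the result from \cite{V} and explicitly remarks that Varadhan's proof is ``rather complicated'' because it treats a general second-order elliptic operator in $\RR^m$ and the distance induced by an associated Riemannian metric (in the present constant-coefficient case this is the Feynman--Kac representation of $v(x,\mu)$ as an exit-time expectation, plus small-time large-deviation estimates, essentially as you describe). Your barrier proof exploits precisely what the general setting lacks: exact radial solutions of \eqref{MHh}. The comparison on the inscribed disc giving $v(x,\mu)\le 1/I_0(\mu d_0)$ is a legitimate application of the maximum principle for $\nabla^2-\mu^2$; on the other side, the $C^2$ boundary does supply a uniform exterior ball radius $r_0$, the collinearity $|x-z|=d_0+r_0$ holds for any nearest boundary point, and since $|y-z|\ge r_0$ on $\bar D$ the function $K_0(\mu|y-z|)/K_0(\mu r_0)$ is an admissible lower barrier, yielding $v(x,\mu)\ge K_0(\mu(d_0+r_0))/K_0(\mu r_0)$. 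The Bessel asymptotics then give the two-sided estimate with $O(\mu^{-1}\log\mu)$ errors, and your handling of the boundary layer is sound: where $d_0<\varepsilon$ the squeeze $0\le-\mu^{-1}\log v\le d_0+C\mu^{-1}$ (with $C$ uniform because $r_0$ is uniform and $d_0\le\operatorname{diam}D$) bounds $|{-\mu^{-1}\log v}-d_0|$ by $\varepsilon+C\mu^{-1}$, while for $d_0\ge\varepsilon$ the argument $\mu d_0\ge\mu\varepsilon\to\infty$ makes the $I_0$ expansion uniform; the crude bound $\mu^{-1}\log I_0(\mu d_0)\le d_0$ you invoke is indeed valid, via $I_0(s)\le\E^{s}$. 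As for what each route buys: yours is elementary and self-contained\,---\,it even re-derives the strict positivity $v>0$ on $\bar D$ (which the paper cites from \cite{HMOSY}) from the $K_0$ barrier, and it produces an explicit convergence rate of the kind evaluated in \cite{HMOSY}; Varadhan's method buys generality (higher dimensions, variable coefficients, Riemannian distance), where no such explicit comparison solutions exist.
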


It is worth mentioning that the original Varadhan's result involves the equation in
$\RR^m$, $m \geq 2$, with a general elliptic operator of second order instead of the
Laplacian, and the distance is induced by a Riemannian metric derived from the
operator's coefficients. For this reason the Varadhan's proof is rather complicated.
In the recent preprint \cite{HMOSY}, signed distance functions are considered, thus
generalizing $d (\cdot, \partial D)$. For this purpose the source term is added to
equation \eqref{MHh}, which allows the authors to extend Theorem~1.2; moreover, the
rate of convergence of $- \mu^{-1} \log v (x, \mu)$ to $d (x, \partial D)$ is
evaluated in the preprint.

Furthermore, let us consider $v (x, \mu)$, satisfying equation \eqref{MHh} and the
Neumann condition
\[ \partial v / \partial n = \mu \quad \mbox{on} \ \partial D 
\]
($n$ is the exterior normal on $\partial D$), instead of the Dirichlet condition
\eqref{bc}. It is interesting to find out whether $- \mu^{-1} \log v (x, \mu)$
converges to $d (x, \partial D)$ in $D$ as $\mu \to \infty$ in this case.

Another classical result has a simple proof which is reproduced here.

\begin{theorem}[Armitage and Kuran, \cite{AK}]
If $d (\cdot, \partial D)$ is superharmonic in a domain $D \subset \RR^2$, then $D$
is convex.
\end{theorem}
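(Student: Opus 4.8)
The plan is to prove the contrapositive: if $D$ is not convex, then the distance function $d(\cdot, \partial D)$ fails to be superharmonic somewhere in $D$. Non-convexity of a domain with $C^2$ boundary means there is a boundary point where the curvature of $\partial D$ bends ``the wrong way,'' so the local geometry near such a point should reveal a region where the distance function is subharmonic (strictly, $\nabla^2 d > 0$) rather than superharmonic.

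The key computational tool will be the behaviour of $d(\cdot, \partial D)$ in a neighbourhood of the boundary. First I would fix a boundary point $x_0$ and introduce local coordinates in which $\partial D$ is the graph of a $C^2$ function, with $x_0$ at the origin and the tangent line horizontal; the signed curvature $\kappa(x_0)$ of $\partial D$ at $x_0$ then enters the second-order expansion. Near a point of $\partial D$ the nearest boundary point is unique (by smoothness), so $d$ is $C^2$ in a one-sided neighbourhood, and one can compute its Laplacian explicitly. The standard formula gives, along the inward normal at distance $t$ from the boundary,
\begin{equation}
\nabla^2 d = \frac{-\kappa}{1 - \kappa \, t} , \label{planlap}
\end{equation}
where $\kappa$ is the curvature taken positive when $\partial D$ is convex toward the interior. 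Thus superharmonicity $\nabla^2 d \le 0$ forces $\kappa \ge 0$ at every boundary point (taking $t \to 0^+$), i.e. $\partial D$ is everywhere convex toward the interior.

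The main step is then to pass from ``every boundary point has nonnegative curvature'' to ``$D$ is convex.'' I would argue that if $\nabla^2 d \le 0$ throughout $D$, then by \eqref{planlap} the curvature $\kappa$ is nonnegative at every point of $\partial D$; a closed $C^2$ curve bounding a domain and having everywhere nonnegative signed curvature bounds a convex region. Concretely, the outward unit normal $n$ (equivalently the tangent direction) turns monotonically as one traverses $\partial D$, so the curve is convex and hence $D$ is convex. The point I expect to be the main obstacle is the reduction itself: one must ensure that superharmonicity of $d$ on the interior genuinely controls the sign of the curvature at \emph{every} boundary point (not merely almost everywhere), and that the one-sided Laplacian computation \eqref{planlap} is valid up to the boundary where $d$ is only one-sidedly smooth.

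An alternative and perhaps cleaner route avoids curvature entirely and argues directly from superharmonicity via the minimum principle: if $D$ were not convex, there would exist two points $p, q \in D$ whose connecting segment exits $D$, and one could test the superharmonic function $d$ against this geometry to reach a contradiction with the sub-mean-value property. In either approach, the essential content is the elementary differential geometry of the distance function summarized in \eqref{planlap}; since the statement is classical, the proof should be short once that formula is in hand.
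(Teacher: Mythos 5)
Your curvature route is genuinely different from the paper's argument, and in the setting where you place it (bounded domain with $C^2$ boundary) it can essentially be made to work; but it does not prove the theorem as stated. Theorem 1.3 assumes nothing about $\partial D$: the domain is arbitrary, possibly unbounded, with no boundary regularity at all, and superharmonicity of $d(\cdot,\partial D)$ is still claimed to force convexity. Every ingredient of your plan\,---\,the tubular neighbourhood in which the nearest boundary point is unique, the one-sided $C^2$ smoothness of $d$, the formula $\nabla^2 d = -\kappa/(1-\kappa\,t)$, and the theorem that a simple closed curve of nonnegative signed curvature bounds a convex region\,---\,requires a $C^2$ (and compact) boundary, so your proof covers only a special case. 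Even within that case there is a loose end: $\partial D$ may have several components (the domain may have holes). The closed-curve theorem you invoke applies to one simple closed curve; to conclude you must also exclude inner boundary components, e.g.\ by noting that a component bounding a hole has total turning $-2\pi$ when oriented with $D$ on the left, hence carries points of negative curvature, contradicting the sign you derived. Finally, your ``alternative and cleaner route'' is only a hope as written: you never exhibit the configuration that contradicts the mean-value property.

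That missing configuration is exactly what the paper supplies, and its proof needs no smoothness whatsoever. Nonconvexity yields (cf.\ \cite[Theorem 4.8]{Val}) a boundary point $y$, say $y=(0,0)$, and a closed half-plane $P=\{x_2\ge 0\}$ such that the punctured half-disc $P\cap(\overline{B_r(y)}\setminus\{y\})$ lies in $D$, while $\partial D\cap B_r(y)$ meets $P$ only at $y$. Then $d(x_0,\partial D)=r/4$ for $x_0=(0,r/4)$, whereas $d(x,\partial D)>x_2$ for every $x$ in $B=B_{r/8}(x_0)$ with $x_1\neq 0$, so that
\[
\frac{1}{\pi (r/8)^2}\int_B d(x,\partial D)\,\D x \;>\; \frac{1}{\pi (r/8)^2}\int_B x_2\,\D x \;=\; \frac{r}{4} \;=\; d(x_0,\partial D)\,,
\]
violating the super-mean-value inequality at $x_0$. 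If you want the theorem in its stated generality, you need an argument of this kind (or a reduction of the general case to the smooth one, which is not straightforward); your curvature computation, on the other hand, is a sound consistency check and does suffice for the way Theorem 1.3 is used in this paper, where $\partial D$ is assumed smooth.
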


\begin{proof}
Let us assume $D \subset \RR^2$ to be nonconvex and show that $d (\cdot, \partial
D)$ is not superharmonic in~$D$. According to the assumption, there exist a point
$y \in \partial D$, a closed half-plane $P$ with $y \in \partial P$ (without loss of
generality, we set $y = (0,0)$ and $P = \{ x \in \RR^2 : x_2 \geq 0 \}$), and $r >
0$ such that
\[ P \cap (\overline{B_r (y)} \setminus \{y\}) \subset D \, , \ \ \mbox{whereas} \ \
(\partial D \setminus \{y\}) \cap B_r (y) \subset \RR^2 \setminus P \, ,
\]
where $B_r (y) = \{ x \in \RR^2 : |x - y| < r \}$ (cf. \cite[Theorem 4.8]{Val}).

Then for $x \in B = B_{r/8} (x_0)$, where $x_0 = (0, r/4)$, we have that $d (x,
\partial D) > x_2$ provided $x_1 \neq 0$. Hence
\[ \int_B d (x, \partial D) \, \D x > \int_B x_2 \, \D x = \pi (r/8)^2 (r/4) =
\pi (r/8)^2 d (x_0, \partial D) \, ,
\]
and so the area mean-value inequality, guaranteeing the superharmonicity of $d
(\cdot, \partial D)$, is violated for~$B$.
\end{proof}

It should be emphasized that Theorem 1.3 is specifically a two-dimensional result,
because neither $D$ nor $\bar D$ need be convex in higher dimensions; in
\cite[Sect.~5]{AK}, Armitage and Kuran give an example confirming the latter
assertion.

%\vspace{-2mm}

{\centering \section{Proof of Theorem 1.1} }

\noindent It is sufficient to show that conditions \eqref{cond} imply that $d
(\cdot, \partial D)$ is superharmonic in $D$; indeed, this allows us to apply
Theorem 1.3, thus demonstrating the convexity of $D$.

For a nonvanishing $u \in C^2 (D)$, one obtains by a straightforward calculation
that
\[ \nabla^2 (\log u) = - \frac{|\nabla u|^2}{u^2} + \frac{\nabla^2 u}{u} \quad
\mbox{in} \ D.
\]
Therefore, if  $v (\cdot, \mu)$ is a solution of the Dirichlet problem \eqref{MHh},
\eqref{bc} with large $\mu > 0$, then it does not vanish, and so
\[ - \nabla^2 (\log v) = - \mu^2 + |\nabla v|^2 / v^2 < 0
\]
provided condition \eqref{cond} is valid. Hence, $- \mu^{-1} \log v (\cdot, \mu)$ is
superharmonic in $D$ for all large $\mu > 0$.

The next step is to demonstrate that passage to the limit in Theorem~1.2 preserves
superharmonicity. The proof is based on the following corollary of the Varadhan's
result \cite[Theorem~3.6]{V}.

\vspace{1.6mm}

\begin{proposition}
Let $v (x, \mu)$ be a solution of the Dirichlet problem \eqref{MHh}, \eqref{bc} in a
bounded domain $D \subset \RR^2$ with smooth boundary. Then for every $\rho \in (0,
1/2)$, there exists a constant $C_\rho > 1$ such that the estimate
\[ v (x, \mu) \leq C_\rho \exp \{ - \mu \, (1 - \rho) \, d (x, \partial D) \} \, ,
\quad x \in \bar D ,
\]
is valid for all $\mu > 0$.
\end{proposition}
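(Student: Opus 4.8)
The plan is to bypass the full strength of Varadhan's argument and prove the estimate directly by a comparison (maximum principle) argument, using the explicit radial solution of \eqref{MHh} built from $I_0$ as a barrier on the largest inscribed ball. The key analytic observation is that the desired bound, once localized, reduces to a single elementary inequality for the modified Bessel function $I_0$ in \emph{one} variable $t = \mu\, d(x,\partial D)$, which is precisely what makes the constant $C_\rho$ uniform in $\mu$.

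First I would record the comparison principle for the operator $L = \nabla^2 - \mu^2$ with $\mu > 0$: if $L\phi = 0$ in a bounded domain $\Omega$ and $\phi \geq 0$ on $\partial\Omega$, then $\phi \geq 0$ in $\Omega$. Indeed, a negative interior minimum at $x^*$ would give $\nabla^2\phi(x^*) \geq 0$ and $\phi(x^*) < 0$, whence $L\phi(x^*) = \nabla^2\phi(x^*) - \mu^2\phi(x^*) > 0$, a contradiction. Next, fix $x_0 \in D$ and set $d = d(x_0,\partial D)$, so that the open ball $B_d(x_0)$ lies in $D$ while $\overline{B_d(x_0)} \subset \bar D$. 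On this ball I introduce the radial barrier
\[
h(x) = \frac{I_0(\mu\,|x - x_0|)}{I_0(\mu\, d)} \, ,
\]
which satisfies $L h = 0$ in $B_d(x_0)$ because $\nabla^2 I_0(\mu\,|x-x_0|) = \mu^2 I_0(\mu\,|x-x_0|)$ (see \cite[sect.~9.6]{AS}), and which equals $1$ on $\partial B_d(x_0)$. Since $v(\cdot,\mu) \leq 1 = h$ on $\partial B_d(x_0)$ by \eqref{ineq}, applying the comparison principle to $\phi = h - v$ yields $v \leq h$ throughout $B_d(x_0)$; evaluating at the centre gives
\[
v (x_0, \mu) \leq h(x_0) = \frac{1}{I_0(\mu\, d)} \, .
\]

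It remains to pass from this to the stated exponential form. For $\rho \in (0, 1/2)$ I would set
\[
C_\rho = \sup_{t \geq 0} \frac{\E^{(1 - \rho)\, t}}{I_0(t)} \, ,
\]
and show this supremum is finite and strictly greater than $1$. Finiteness follows from the continuity and positivity of $I_0$ on $[0,\infty)$ together with the asymptotics $I_0(t) \sim \E^{t}/\sqrt{2\pi t}$ as $t \to \infty$ (see \cite[sect.~9.6]{AS}), which force the ratio to decay like $\sqrt{2\pi t}\,\E^{-\rho t} \to 0$; the value $1$ at $t = 0$ together with $I_0(t) = 1 + t^2/4 + \cdots$ shows the ratio exceeds $1$ for small $t > 0$, so $C_\rho > 1$. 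With $t = \mu\, d$ this reads $1/I_0(\mu d) \leq C_\rho \exp\{-\mu(1-\rho)\,d\}$, which is the claim for $x_0 \in D$; the estimate extends to $\bar D$ by continuity, the boundary case $d = 0$ being immediate from $v = 1 \leq C_\rho$. I expect no serious obstacle here: the only point requiring care is the \emph{uniformity in $\mu$}, and it is handled precisely by reformulating the estimate as a supremum over the single scalar $t = \mu\, d$ rather than over $\mu$ and $x$ separately.
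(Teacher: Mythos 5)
Your proof is correct, but it is not the route the paper takes: the paper gives no self-contained proof of this Proposition at all, presenting it instead as a corollary of Varadhan's result \cite[Theorem~3.6]{V}, i.e.\ it is imported from Varadhan's analysis of general second-order elliptic operators and the associated Riemannian distance. Your argument is a genuinely different, elementary alternative: comparison with the barrier $h(x) = I_0(\mu|x-x_0|)/I_0(\mu d)$ on the largest inscribed disc $B_d(x_0)$ gives the intermediate bound $v(x_0,\mu) \leq 1/I_0(\mu d)$, and the uniformity in $\mu$ is then reduced to finiteness of $\sup_{t\geq 0} \E^{(1-\rho)t}/I_0(t)$ via the asymptotics $I_0(t) \sim \E^{t}/\sqrt{2\pi t}$. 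The ingredients all check out: the weak maximum principle holds for $\nabla^2 - \mu^2$ because the zeroth-order coefficient is negative; $I_0(\mu r)$ is indeed the regular radial solution of \eqref{MHh}; and the inequality $v \leq 1$ on $\partial B_d(x_0)$ is legitimate even where that circle touches $\partial D$, since \eqref{ineq} holds on $\bar D$ (there $v = 1$ by \eqref{bc}, and $h = 1$ too, so $h - v \geq 0$ still). What your route buys: it is self-contained, the constant $C_\rho$ is universal (independent of $D$), the argument works for every $\rho \in (0,1)$ rather than only $(0,1/2)$, and the bound $v(x,\mu) \leq 1/I_0(\mu\, d(x,\partial D))$ is in fact sharper than the stated exponential estimate. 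What the paper's route buys: no new analysis is needed, and the cited theorem covers the much more general setting (variable-coefficient operators, Riemannian distances) that Varadhan actually treats; for the purposes of Theorem~1.1, however, your elementary barrier proof fully suffices.
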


In view of inequality \eqref{ineq} and the first condition \eqref{cond}, this
assertion implies that
\begin{equation}
- \mu^{-1} \log v (x, \mu) \geq - \mu^{-1} \log C_\rho + (1 - \rho) \, d (x,
\partial D) \, , \quad x \in \bar D . \label{final}
\end{equation}
Averaging this over an arbitrary open disc $B_r (x)$ such that $\overline{B_r (x)}
\subset D$, we obtain
\[ - \mu^{-1} \log v (x, \mu) \geq - \mu^{-1} \log C_\rho + \frac{1 - \rho}{\pi r^2}
\int_{B_r (x)} d (y, \partial D) \, \D y \, ,
\]
Indeed, the function on the left-hand side of \eqref{final} is superharmonic and the
first term on the right is harmonic. Letting $\mu \to \infty$ first and then $\rho
\to +0$, we see that the last inequality turns into
\[ d (x, \partial D) \geq \frac{1}{\pi r^2} \int_{B_r (x)} d (y, \partial D) \, \D y
\, , \quad x \in D ,
\]
in view of \eqref{lim}.

Since the obtained inequality is valid for all $r > 0$ such that $\overline{B_r (x)}
\subset D$, the distance function $d (\cdot, \partial D)$ is superharmonic in $D$.
Then Theorem~1.3 yields the assertion of Theorem~1.1, thus completing the proof.

\vspace{-12mm}

\renewcommand{\refname}{
\begin{center}{\Large\bf References}
\end{center}}
\makeatletter
\renewcommand{\@biblabel}[1]{#1.\hfill}
\makeatother

\end{document}